\newtheorem{theorem}{Theorem}
\newtheorem{lemma}[theorem]{Lemma}
\theoremstyle{definition}
\newtheorem{definition}[theorem]{Definition}
\theoremstyle{remark}
\numberwithin{equation}{section}
\title{Symmetric harmoniousness of odd-order groups}
\author{ Mohammad Javaheri  \\
Department of Mathematics \\
Siena University \\
515 Loudon Road, Loudonville, NY 12211, USA\\
\small{mjavaheri@siena.edu}  
}
\date{November 2025}
\begin{document}

\maketitle

\begin{abstract}
We prove that every odd-order group is symmetric harmonious: there exists a permutation $g_0,g_1,\ldots, g_{\ell-1}$ of elements of $G$ such that the consecutive products $g_0g_1,g_1g_2,\ldots, g_{\ell-1}g_0$ also form a permutation of elements of $G$ and $g_{\ell-i}=g_i^{-1}$ for all $1\leq i \leq \ell-1$. We apply this result to obtain new examples of R*-sequenceable groups. \end{abstract}

Keywords: Harmonious groups, R*-sequenceable groups

MSC(2020): 05E16

\section{Introduction}
A {\it harmonious} sequence in a group $G$ is a permutation $g_0,g_1,\ldots, g_{\ell-1}$ of elements of $G$ such that the consecutive products $g_0g_1,g_1g_2,\ldots, g_{\ell-1}g_0$ also form a permutation of elements of $G$, where $\ell=|G|$. Such sequences were introduced by Beals et al.\ \cite{JG} in 1991 in connection with the study of complete mappings. A {\it complete mapping} of a group $G$ is a bijection $\phi: G \rightarrow G$ such that the map $x\mapsto x \phi(x)$ is also a bijection. 

If $g_0,g_1,\ldots, g_{\ell-1}$ is a harmonious sequence in $G$, then the map $\phi: g_i \mapsto g_{i+1}$ for $0\leq i \leq \ell-1$ (where $g_\ell=g_0$) is a complete mapping of $G$, and moreover $\phi$ is a single $|G|$-cycle (i.e., the orbit of every element under iteration of $\phi$ is all of $G$). Conversely, if $\phi$ is a complete mappings of $G$ that is a single $|G|$-cycle, then for every $g\in G$, the sequence $g, \phi(g), \phi^2(g),\ldots, \phi^{\ell-1}(g)$, obtained by iterating the action of $\phi$, is a harmonious sequence in $G$.

Hall and Paige \cite{HP} proved that a necessary condition for a group $G$ to admit a complete mapping is that every Sylow 2-subgroup of $G$ be either trivial or non-cyclic. They conjectured that this condition, now known as the Hall-Paige condition) is also sufficient. The Hall-Paige conjecture was affirmed in 2009 by Bray, Evans, and Wilcox \cite{Bray,Evans,Wilcox}. 

For harmonious groups, the Hall-Paige condition is not the only obstruction. Elementary 2-groups $(\mathbb{Z}_2)^n$ with $n\geq 1$ are not harmonious. Beals et al.\ \cite{JG} showed that all abelian groups except elementary 2-groups and binary groups (groups with a unique element of order 2) are harmonious \cite{JG}. 

Non-abelian harmonious groups include the dihedral and dicyclic groups of order $8n$, where $n>1$ is an integer \cite{JG,Wang,WL}. M\"{u}esser and Pokrovskiy \cite{MP} recently proved that all sufficiently large groups satisfying the Hall-Paige condition, except the elementary 2-groups, are harmonious.

If $G$ is a non-binary abelian group and $G\neq \mathbb{Z}_3$, then $G\setminus \{0\}$ is harmonious \cite{JG}. If $G$ is a binary abelian group with $\imath_G$ being the unique element of order 2, then $G\backslash \{\imath_G\}$ is harmonious \cite{DJ}. 

Wang and Leonard \cite{WL2} introduced the notion of symmetric harmonious groups and used such groups to construct new examples of R*-sequenceable groups. 

\begin{definition}
Let $G$ be a finite group.
\begin{itemize}
\item[i)] $G$ is {\it symmetric harmonious} if there exists a permutation $g_0,\ldots, g_{\ell-1}$ of elements of $G$ such that $g_0g_1,g_1g_2,\ldots, g_{\ell-1}g_0$ is also a permutation of elements of $G$ and 
$$g_ig_{\ell-i}=1_G~\mbox{for all}~1\leq i \leq \ell-1.$$
\item[ii)] $G$ is {\it R-sequenceable} if there exists a permutation $g_0,\ldots, g_{\ell-1}$ of elements of $G$ such that $g_0^{-1}g_1,g_1^{-1}g_2,\ldots, g_{\ell-1}^{-1}g_0$ is also a permutation of elements of $G$; it is R*-sequenceable if additionally $g_{0}g_{2}=g_{2}g_{0}=g_1$.
\end{itemize}
\end{definition}

Wang and Leonard \cite{WL2} proved that all odd abelian groups are symmetric harmonious. We prove the following result on symmetric harmonious groups. 

\medskip

\noindent {\bf Main Theorem.} {\it A group is symmetric harmonious if and only if it has odd order.}

\medskip 

The Hall-Paige condition is a necessary condition for the existence of R-sequencings, which yield complete mappings and orthogonal Latin squares \cite{Paige}. Friedlander et al.\ \cite{Fr} introduced R*-sequenceable groups and showed that if a Sylow 2-subgroup of $G$ is R*-sequenceable, then $G$ is R-sequenceable. 

Non-binary abelian groups with Sylow 2-subgroups of order other than 8 are R*-sequenceable \cite{Fr,H}. The groups $H$, $(\mathbb{Z}_2)^3 \times H$, and $\mathbb{Z}_2 \times \mathbb{Z}_4 \times H$, where $H$ is an odd-order abelian group satisfying certain restrictions on its Sylow 3-subgroups, are R*-sequenceable \cite{Fr,OW}. Non-abelian examples of R*-sequenceable groups include those of order the product of two distinct odd primes \cite{WL}, dihedral groups of order $4n$, where $n\geq 4$, and dicyclic groups of order $n=16,32 \pmod{48}$ \cite{WL2}.

Ollis showed that if $G$ is an even R*-sequenceable group of even order and $H$ is a nilpotent group of odd order, then $G\times H$ is R*-sequenceable. Wang and Leonard \cite[Prop.\ 1]{WL2} proved that if $G$ is an even-order R*-sequenceable group and $H$ is a symmetric harmonious group, then $G\times H$ is R*-sequenceable. Combining this with our main theorem yields an improvement of Ollis' result and provides new examples of R*-sequenceable groups.   

\begin{theorem}
If $G$ is an even R*-sequenceable group and $H$ is an odd-order group, then $G\times H$ is R*-sequenceable. 
\end{theorem}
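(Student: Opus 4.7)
The plan is to invoke two tools assembled earlier in the paper and chain them in a single line of deduction. The first tool is the Main Theorem, which asserts that every odd-order group is symmetric harmonious. The second is Wang and Leonard's result \cite[Prop.\ 1]{WL2}, quoted in the paragraph immediately preceding the theorem: if $G$ is an R*-sequenceable group of even order and $H$ is a symmetric harmonious group, then $G\times H$ is R*-sequenceable.

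With these ingredients in hand, the proof collapses to a one-line deduction. Since $H$ has odd order, the Main Theorem shows that $H$ is symmetric harmonious; feeding this $H$ together with the hypothesized even-order R*-sequenceable $G$ into \cite[Prop.\ 1]{WL2} yields the conclusion that $G\times H$ is R*-sequenceable. The only bookkeeping I would carry out is to confirm that the notion of \emph{symmetric harmonious} used in \cite{WL2} coincides with Definition 1 of the present paper, which is immediate from the conventions set out there.

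There is thus no genuine obstacle at this stage; the entire mathematical content of the theorem is packed into the Main Theorem, whose proof occupies the body of the paper. This companion theorem is worth stating separately because it strictly extends Ollis' $G\times H$ construction: previously the combination theorem was available only when $H$ was odd abelian (and, via Wang-Leonard, symmetric harmonious) or nilpotent of odd order, whereas the Main Theorem removes all algebraic hypotheses on $H$ beyond having odd order, so the new examples of R*-sequenceable groups arise from taking $H$ to be \emph{any} odd-order group—abelian or not, nilpotent or not—and pairing it with a known even R*-sequenceable $G$, such as a dihedral group of order $4n$ with $n\geq 4$ or a dicyclic group of order congruent to $16$ or $32\pmod{48}$.
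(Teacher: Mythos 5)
Your proposal is correct and matches the paper exactly: the theorem is obtained by combining Wang and Leonard's \cite[Prop.\ 1]{WL2} with the Main Theorem, which is precisely what the paper does in the paragraph preceding the statement. No further comment is needed.
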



\section{Proof of the main theorem}\label{2}

We first prove a lemma allowing us to lift symmetric harmonious sequences via normal extensions. 

\begin{lemma}\label{HinG}
Suppose $H$ is a normal subgroup of $G$ and both $H$ and $G/H$ are odd-order symmetric harmonious groups. Then $G$ is a symmetric harmonious group. 
\end{lemma}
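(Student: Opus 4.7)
Let $m=|H|$, $n=|G/H|$, and $\ell=mn$. Fix symmetric harmonious sequences $h_0=1_H,h_1,\ldots,h_{m-1}$ in $H$ and $\bar x_0=\bar 1,\bar x_1,\ldots,\bar x_{n-1}$ in $G/H$, and choose coset representatives $x_0=1_G,x_1,\ldots,x_{n-1}\in G$ with $x_{n-i}=x_i^{-1}$ for $1\le i\le n-1$ (possible because $\bar x_{n-i}=\bar x_i^{-1}$: pick $x_i$ freely for $1\le i\le(n-1)/2$ and set $x_{n-i}:=x_i^{-1}$). Writing $t=qn+r$ with $0\le q<m$ and $0\le r<n$, I would define
\[
g_{qn+r}=\begin{cases} h_q\,x_r, & 0\le r\le (n-1)/2, \\ x_r\,h_{q+1\bmod m}, & (n+1)/2\le r\le n-1. \end{cases}
\]
The asymmetric placement of $x_r$ together with the unit index-shift in the upper half is calibrated so that under the involution $t\leftrightarrow\ell-t$ the cancellations $x_r x_{n-r}=1_G$ and $h_{q+1}h_{m-q-1}=1_H$ give $g_t g_{\ell-t}=1_G$ for all $1\le t\le\ell-1$; and the permutation property is immediate because within each coset, $\{h_q\,x_r\}_q$ or $\{x_r\,h_{q+1\bmod m}\}_q$ runs through $Hx_r=x_rH$ by normality.

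The core task is to show that the cyclic products $g_t g_{t+1}$ form a permutation of $G$. Symmetric harmoniousness of $G/H$ ensures that the product cosets $\bar x_r\bar x_{(r+1)\bmod n}$ permute $G/H$, so each coset of $G$ receives exactly $m$ products, all coming from one of four cases indexed by $r$: (A) $r<(n-1)/2$ (both lower); (B) $r=(n-1)/2$ (crossover, where $x_r x_{r+1}=1_G$); (C) $(n+1)/2\le r\le n-2$ (both upper); (D) $r=n-1$ (the wrap to the next $q$-block). Factoring out the natural coset representative in each case, the $H$-coefficients of the $m$ products come out to $h_q\phi_r(h_q)$, $h_q h_{q+1}$, $\phi_{r+1}^{-1}(h_{q+1})h_{q+1}$, and $h_{q+1}^2$ respectively, where $\phi_i(h)=x_ihx_i^{-1}$. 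Cases B and D give permutations of $H$ immediately: (B) by the symmetric harmoniousness of $H$, and (D) by the bijectivity of squaring in an odd-order group.

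The main obstacle is Cases A and C, which require the twist map $h\mapsto h\phi(h)$ (and $h\mapsto \phi(h)h$) to be a bijection of $H$ for $\phi=\phi_r$. I would isolate this as an auxiliary claim: \emph{if $H$ has odd order and $\phi\in\operatorname{Aut}(H)$ has odd order, then $h\mapsto h\phi(h)$ is a bijection of $H$.} The hypothesis is met here because $\phi_r$ is conjugation by $x_r\in G$ and $G$ has odd order, so $\phi_r$ has odd order in $\operatorname{Aut}(H)$. To prove the claim, suppose $h_1\phi(h_1)=h_2\phi(h_2)$ and set $k=h_1h_2^{-1}$; a short rearrangement yields $\alpha(k)=k^{-1}$ for an automorphism $\alpha=\operatorname{inn}_{y}\circ\phi^{-1}$ with $y\in H$, which is realized on $H$ as conjugation by the odd-order element $yx_r^{-1}\in G$. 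Hence $\alpha$ also has odd order; combined with $\alpha^2(k)=k$ this forces the $\alpha$-orbit of $k$ to have length dividing $\gcd(2,\operatorname{ord}(\alpha))=1$, so $k=k^{-1}$, and oddness of $|H|$ yields $k=1_H$. This gives $h_1=h_2$, settling Cases A and C and completing the proof.
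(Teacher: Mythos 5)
Your construction is correct, and it takes a genuinely different route from the paper's. The paper sets $g_{pm+r}=k_r\mu_r h_{2p+1}\mu_r^{-1}$ (and $g_{pm}=h_{2p}$), where $\mu_r=k_{r+1}\cdots k_{m-1}$ is a suffix product of the quotient representatives; the index doubling and the conjugation by $\mu_r$ are calibrated so that every consecutive product telescopes to either $\mu_{r-1}h_{2p+1}^{2}\mu_{r+1}^{-1}$ or an honest harmonious product of the $H$-sequence, so the only odd-order input is bijectivity of squaring. You instead place the $H$-component on alternating sides of the coset representative and pay for it with the auxiliary claim that $h\mapsto h\cdot xhx^{-1}$ is a bijection of $H$ for $x\in G$, a twisted complete-mapping statement the paper never needs. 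Your proof of that claim is sound: $a\,xax^{-1}=b\,xbx^{-1}$ yields $ckc^{-1}=k^{-1}$ with $k=b^{-1}a$ and $c=xb$, and since $c$ has odd order the $\langle c\rangle$-orbit of $k$ has length dividing both $2$ and an odd number, forcing $k=k^{-1}=1_H$. I verified your four cases: the $H$-coefficients $h_q\phi_r(h_q)$, $h_qh_{q+1}$, $\phi_{r+1}^{-1}(h_{q+1})h_{q+1}$, and $h_{q+1}^{2}$ are as stated (and the Case C map is a bijection because $h\mapsto\psi(h)h$ is $\psi$ composed with $h\mapsto h\psi^{-1}(h)$); the crossover cancellation $x_{(n-1)/2}x_{(n+1)/2}=1_G$ and the wrap at $r=n-1$ both work; and $g_tg_{\ell-t}=1_G$ holds in all index regimes, including $q+1\equiv 0\pmod m$, where both $H$-parts degenerate to $h_0$. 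As for what each approach buys: the paper's version is shorter and needs no statement about twisted translations, while yours isolates a small reusable lemma (conjugation-twisted translations by elements of an odd-order overgroup are complete mappings) and dispenses with the $\mu_r$ bookkeeping at the cost of a four-case analysis. One point to make explicit: you assume $h_0=1_H$ and $\bar x_0=\bar 1$; this is harmless because in an odd-order group the indices $1,\dots,\ell-1$ of a symmetric harmonious sequence pair into inverse pairs of non-identity elements, so $g_0=1_G$ automatically.
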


\begin{proof}
Let $K_0,K_1,\ldots, K_{m-1}$ be a symmetric harmonious sequence in $G/H$, where each $K_i$ is a coset of $H$ in $G$ and $K_0=H$. Choose $k_i \in K_i \subseteq G$ so that $k_0=1_G$. Since $K_i K_{m-i}=H$ for $1 \leq i \leq m-1$, without loss of generality, we can choose $k_1,\ldots, k_{m-1}$ so that
\begin{equation}\label{kieq1}
k_i k_{m-i}=1_G~~\mbox{for all}~1\leq i \leq m-1.
\end{equation}
For $0\leq i \leq m-1$, we let $\mu_i$ be the product of consecutive terms $k_{i+1},\ldots, k_{m-1}$: 
$$\mu_i=k_{i+1} \cdots k_{m-1},~\mbox{for}~0\leq i \leq m-2;~\mu_{m-1}=1_G.$$
Let $t=(m-1)/2$. It follows from \eqref{kieq1} with $i=t$ that $k_tk_{t+1}=1_G$ and so 
\begin{equation}\label{kieq2}
\mu_0=k_1k_2\cdots k_tk_{t+1}\cdots k_{m-1}=k_1k_2\cdots k_{t-1}k_{t+2}\cdots k_{m-1}= \cdots=1_G.
\end{equation}
Additionally, for $1\leq r \leq m-1$, by \eqref{kieq2}, one has
\begin{align} \nonumber
k_{m-r}\mu_{m-r} & =k_{m-r}k_{m-r+1}\cdots k_{m-1}\\ \nonumber
& =k_r^{-1}k_{r-1}^{-1}\cdots k_{1}^{-1}\\ \nonumber
& =(k_1\cdots k_{r})^{-1} \\ \label{murinv}
& =k_{r+1}\cdots k_{m-1}=\mu_r.
\end{align}

Next, let $h_0,h_1,\ldots, h_{n-1}$ be a symmetric harmonious sequence in $H$. For $0\leq r \leq m-1$ and $0\leq p \leq n-1$, we define 
\begin{equation}\label{deflam}
g_{p m+r}=\begin{cases}
h_{2p}& \mbox{if $r=0$},\\
k_r  \mu_r   h_{2p+1}  \mu_r^{-1} & \mbox{if $0<r \leq m-1$},
\end{cases}
\end{equation}
where the indices in $h_{2p}$ and $h_{2p+1}$ are computed modulo $n$. Sine $H$ is a normal subgroup of $G$, we have $g_{pm+r} \in K_r$. We claim that $g_0,\ldots, g_{mn-1}$ is a symmetric harmonious sequence in $G$. 

\medskip

\noindent{\bf Step 1.} We first show that $g_0,\ldots, g_{mn-1}$ is a permutation of elements of $G$. It suffices to prove that if $i,j \in \{0,\ldots, mn-1\}$ and $g_i=g_j$, then $i=j$. Write $i=pm+r$ and $j=qm+s$ with $p,q \in \{0,\ldots, n-1\}$ and $r,s \in \{0,\ldots, m-1\}$. 

Since $g_{pm+r} \in K_r$ and $g_{qm+s}\in K_s$, the assumption $g_i=g_j$ implies that $r=s$. Suppose $r=0$. Then \eqref{deflam} gives $h_{2p}=h_{2q}$, so $2p=2q \pmod n$. As $n$ is odd, $p=q$ and $i=j$. The case of $r>0$ is similar. 

\medskip

\noindent{\bf Step 2.} Next, we need to show that if $i,j \in \{0,\ldots, mn-1\}$ and $g_ig_{i+1}=g_{j}g_{j+1}$ (where $g_{mn}=g_0$), then $i=j$. Write $i=pm+r$ and $j=qm + s$ with $p,q \in \{0,\ldots, n-1\}$ and $r,s \in \{0,\ldots, m-1\}$. Since $g_i \in K_r$, we have $g_ig_{i+1}\in K_rK_{r+1}$. 

Thus $g_ig_{i+1}=g_{j}g_{j+1}$ implies $K_{r}K_{r+1}=K_{s}K_{s+1}$ (where $K_m=K_0$). As $K_0,\ldots, K_{m-1}$ is a harmonious sequence, it follows that $r=s$. To prove $p=q$, we consider three cases:

{\it Case 1:} $r\neq 0,m-1$. Then 
\begin{align} \nonumber
g_ig_{i+1} & = k_r\mu_r h_{2p+1} \mu_r^{-1} \cdot k_{r+1} \mu_{r+1} h_{2p+1} \mu_{r+1}^{-1}\\ \nonumber
& =\mu_{r-1} (h_{2p+1})^2 \mu_{r+1}^{-1}
\end{align}
Therefore, it follows from $g_ig_{i+1}=g_jg_{j+1}$ that $h_{2p+1}^2=h_{2q+1}^2$. In an odd-order group the mapping $h\mapsto h^2$ is a bijection, hence $2p+1=2q+1 \pmod n$, and so $p=q$. 

{\it Case 2:} $r=0$. Then
\begin{align}\nonumber
g_ig_{i+1} & =h_{2p}\cdot k_1 \mu_1h_{2p+1}\mu_1^{-1},\\ \nonumber
& =(h_{2p}h_{2p+1})\mu_1^{-1}.
\end{align}
It follows from $g_ig_{i+1}=g_jg_{j+1}$ that $h_{2p}h_{2p+1}=h_{2q}h_{2q+1}$, which implies that $2p=2q \pmod n$ (since $h_0,h_1,\ldots, h_{n-1}$ is harmonious), hence $p=q$. 

{\it Case 3:} $r=m-1$. Then $i=pm+(m-1)$ and $i+1=(p+1)m +0$. Therefore,
\begin{align} \nonumber
g_ig_{i+1} & = h_{2p+1}h_{2p+2},
\end{align}
and as in Case 2, we must have $p=q$. 

\medskip

\noindent {\bf Step 3.} It is left to show that $g_ig_{mn-i}=1_G$ for all $1\leq i \leq mn$. Let $i=pm+r$ with $0\leq p \leq n-1$ and $0\leq r\leq m-1$, one has $mn-i=m(n-1-p)+m-r$. If $r>0$, then by \eqref{murinv}, one has
\begin{align} \nonumber
g_i g_{mn-i} &=k_r  \mu_r h_{2p+1} \mu_r^{-1}\cdot k_{m-r}\mu_{m-r} h_{2(n-1-p)+1} \mu_{m-r}^{-1} \\ \nonumber
& =k_r  \mu_r h_{2p+1}h_{2(n-1-p)+1} \mu_{m-r}^{-1} \\ \nonumber
& =k_r  \mu_r h_{2p+1}h_{n-(2p+1)} \mu_{m-r}^{-1} \\ \nonumber
& =k_r  \mu_r  \mu_{m-r}^{-1} \\ \nonumber
& =k_r  k_{m-r}\mu_{m-r}  \mu_{m-r}^{-1} \\ \nonumber
&=1_G.
\end{align}
If $r=0$, then $i=pm$ and $mn-i=(n-p)m$, and so
\begin{equation} \nonumber
g_i g_{mn-i} =h_{2p}  h_{2(n-p)}=h_{2p}h_{n-2p}=1_G.
\end{equation}
Thus $G$ is symmetric harmonious.  
\end{proof}

We are now ready to prove our main theorem which states that a group is symmetric harmonious if and only if it has odd order. 

\begin{proof}
We first show that if $G$ is symmetric harmonious, then it has odd order. Let $g_0,\ldots, g_{\ell-1}$ be a symmetric harmonious sequence in $G$. There exists $0\leq i \leq \ell-1$ such that $g_{i-1}g_i=1_G$ (indices modulo $\ell$), so $g_{i-1}=g_i^{-1}=g_{\ell-i}$. Thus $i-1=\ell-i \pmod \ell$, which implies that $\ell$ is odd. 

For the converse, let $G$ be an odd-order group that is not symmetric harmonious and has minimal order. Since every abelian group admits a symmetric harmonious sequence \cite{JG,WL}, $G$ is non-abelian. By Feit-Thompson theorem, $|G'|<|G|$, where $G'$ is the derived subgroup of $G$. By minimality of $|G|$, both $G'$ and $G/G'$ are symmetric harmonious, so by Lemma \ref{HinG}, $G$ is symmetric harmonious, a contradiction. Thus, every odd-order group is symmetric harmonious. 
\end{proof}

\end{document}